\newcommand{\Z}{\mathbb Z}
\newtheorem{theorem}{Theorem}[section]
\newtheorem{proposition}[theorem]{Proposition}
\newtheorem{definition}[theorem]{Definition}
\title{Digital images unveil geometric structures in pairs of relatively prime numbers}
\author[1]{\sc{Benjam\'in A.~Itz\'a-Ortiz}}
\author[1]{\sc{Roberto L\'opez-Hern\'andez}}
\author[2]{\sc{Pedro Miramontes}}
\affil[1]{Centro de Investigaci\'on en Matem\'aticas, Universidad Aut\'onoma del Estado de Hidalgo, Pachuca, Hidalgo, \sc{México}}
\affil[2]{Facultad de Ciencias, Universidad Nacional Aut\'onoma de M\'exico, Mexico City, \sc{M\'exico}}
\date{June 8th, 2018}
\begin{document}

\maketitle


\section*{Introduction}
The interaction of geometry with other branches of mathematics has
proven to be  fruitful and mutually beneficial. Let us mention just
four examples of this relationship: (1) The synthesis of geometry and
algebra by Descartes and Fermat giving rise to analytic geometry; (2)
The theory of dynamical systems by Henri Poincaré, which allowed the
marriage of geometry and differential equations; (3) The use of
cutting sequences by Hedlund and Morse to exemplify the powerful
machinery of the symbolic dynamical systems theory that they created; and (4) The union between geometry and number theory accomplished by Hermann Minkowski. In this paper, we explore an example of the amazing relationship between number theory and geometry. 

In 1887, Hermann Minkowski submitted the paper {\it Räumliche Anschauung und Minima positiv definiter quadratischer}\footnote{Spatial insight and minima of positive definite quadratic forms} as a requisite to apply for a vacant position at the University of Bonn. The great J.~Dieudonné considered this report as one that  ``...contains the first example of the method which Minkowski would develop some years later in his famous {\emph{geometry of numbers}}" \cite{dieu}.

 The Geometry of Numbers \cite{minko} is Minkowski's posthumously published book pioneering the study of number theory problems in the realm of geometry. In it, Minkowski conceived geometry  as a scenario to represent complicated problems in number theory and a platform to obtain clues about its solutions by means of the {\it Räumliche Anschauung}. A fundamental tool for this purpose is the set of points in the plane with integer coordinates, the so-called standard lattice \cite{nature}. 

One way that the standard lattice has been useful is in working out
problems by simply giving an intuitive explanation of important known
mathematical concepts. For example, Figure 1 shows the standard
lattice and a  line through the origin with an irrational slope. The
fact that the line does not cross any other lattice points leads
immediately to a visualization of Dedekind's cut. This same line can be
used to define a Sturmian sequence, which is a biinfinite sequence
using the symbols 0 and 1 obtained in the following way: adding a
symbol $1$ or a symbol  $0$ every time the line crosses a horizontal
lattice line or a vertical lattice line, respectively. At the origin,
the only point where the line crosses a horizontal and vertical
lattice line simultaneously, we decide to add a word $01$ rather than choosing between a symbol 0 or 1. This construction characterizes all the biinfinite sequences in the symbols 0 and 1 such that the differences in the number of of 0's between any two consecutive symbols 1's is at most 1. 
Finally, as an illustration of the density of the  rational numbers in the reals, let us consider the particular case when the line has a slope equal to the golden mean $\Phi=\dfrac{1+\sqrt{5}}{2}$. Since it is known that the sequence  $\dfrac{F_{n+1}}{F_n}$, where $F_n$ is the $n$-th term of the familiar Fibonacci sequence, converges to $\Phi$, then the family of lines (through the origin)  having slopes equal to  $\dfrac{F_{n+1}}{F_n}$ converges to the line having slope $\Phi$. Therefore, since the pairs of relatively prime numbers  $(F_{n},F_{n+1})$ determine the family of lines, we may say that there is a sequence of pairs of relatively prime numbers that represent the golden mean number. 

In its origins in ancient Greece, geometry was an eminently visual
discipline. With the increasing level of abstraction that it attained
in the XIX century, it nearly left aside any pictorial
representation. Minkowski's book does not have a single
illustration. In this essay, we show that with the advent of digital computers, images can play again a central role in the understanding and development of geometry.

With this purpose, we  show that when  plotting  a certain large collection of relatively prime numbers, a family of  quadratic arcs emerges. The appearance of these arcs was unexpected and, to the best of our knowledge, has not been observed before. We divide this work into two sections. In the first section, we introduce the  B\'ezout transformations and use them to generate a special family of relatively prime numbers. In the second section, we state our main results which justify the appearance of quadratic arcs in the graphs introduced in Section 1 and also their symmetries.

The authors gratefully acknowledge the valuable suggestions from the referees which helped to improve this paper.

\begin{figure}[ht]
\centering
\includegraphics[scale=0.6]{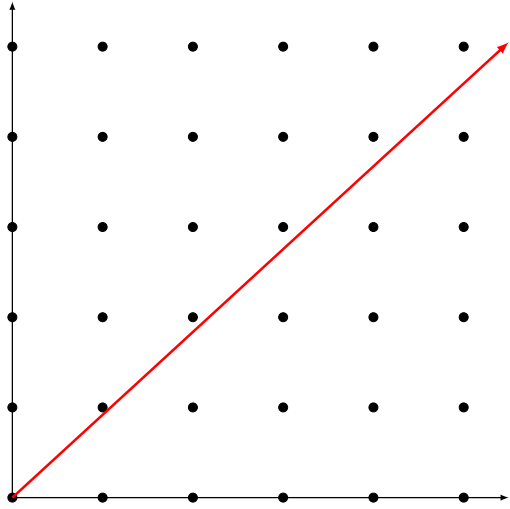}
\caption{The standard lattice and a line with irrational slope}
\label{latt}
\end{figure}

\section{B\'ezout set}

In this section we introduce the set of pairs of relatively prime numbers that we study in this paper. The fundamental theorem of arithmetic states that any positive integer greater than one is either prime or the product of prime numbers and that this decomposition is unique except for the order of the factors.  Two positive integers $p$ and $q$  are said to be relatively prime if their only common (positive) divisor is 1. Notice that when $p$ is a
prime number, then every positive number $q$ less that $p$ is
relatively prime to $p$; however, if $p$ is not prime, then the number of positive integers relatively prime to $p$ is given by the well-known Euler's totient function $\phi(p)$. Therefore, if $p$ is prime, then $\phi(p)=p-1$.

If one draws dots in the Cartesian plane each representing a pair of relatively prime numbers, it is intriguing to discover a fractal-like picture; see, for example, \cite{wolfram}. In Figure~\ref{figRP}, we draw all pairs of relatively prime numbers $(p,q)$ with $1\leq q \leq p\leq 1000$.

\begin{figure}[ht]
\centering
\includegraphics[scale=0.4]{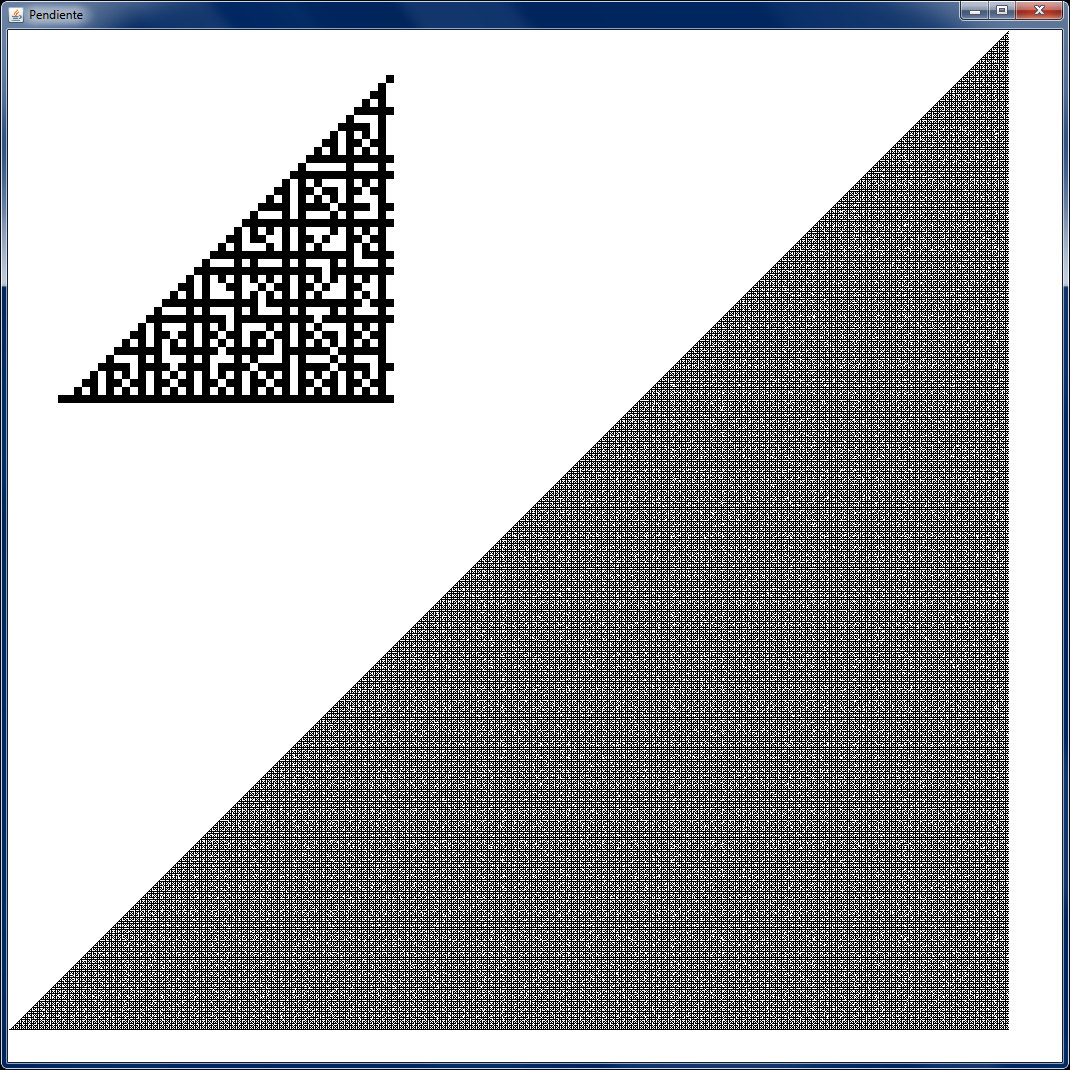}
\caption{Pairs $(p,q)$ of relatively prime numbers with $0<q\leq p\leq 1000$. A detail of the graph is shown in the box on the top left-hand side.}
\label{figRP}
\end{figure}

As we mentioned at the beginning of this section, our goal is to introduce a way to select a subset of pair of relatively prime numbers. First let us introduce the following notation.
\begin{definition}
Given an integer $p>1$ and an integer $1<q$ relatively prime to $p$, for each $i=-1,0,1$, we define  the {\em $i$-B\'ezout coefficients} of $(p,q)$ as the unique pair of relatively prime numbers $B_i(p,q)=(a_i,b_i)$ such that $0<a_i \leq p$, $0< b_i \leq q$  and  satisfy the {B\'ezout identity}
\begin{equation}\label{BezoutID}
    a_iq-b_ip=i.
\end{equation}
 \end{definition}
 We refer to  \cite[Theorem~5.1]{niven} 
 for a proof of the existence of $B_{\pm 1}(p,q)$. The case $i=0$ is actually trivial, as one may choose $a_0=p$ and $b_0=q$; however, the definition of $B_0$ may well be extended to all pairs of positive integers $(p,q)$ by letting $B_0(p,q)$ to be the unique pair of positive relatively prime integers $(a_0,b_0)$ such that $p/q=a_0/b_0$ 
 
  For example, $B_1(6,5)=(5,4)$, $B_{-1}(6,5)=(1,1)$, $B_0(6,5)=(5,6)$ and $B_1(5,2)=(3,1)$, $B_{-1}(5,2)=(2,1)$, $B_0(5,2)=(2,5)$.

 It becomes apparent that by fixing a positive $p$ greater than 1, and plotting,  for each $1<q<p$ relatively prime to $p$, the pairs   $B_{\pm 1}(p,q)$ and $B_{\pm 1}(q,p)$, then the generated  graph contains some  intriguing arcs. In Figures~\ref{3.1}, \ref{3.2} and \ref{3.3} we show a sequence of such graphs with $p=10^6$.

\begin{figure}[!htb]
\minipage{0.32\textwidth}
  \includegraphics[width=\linewidth]{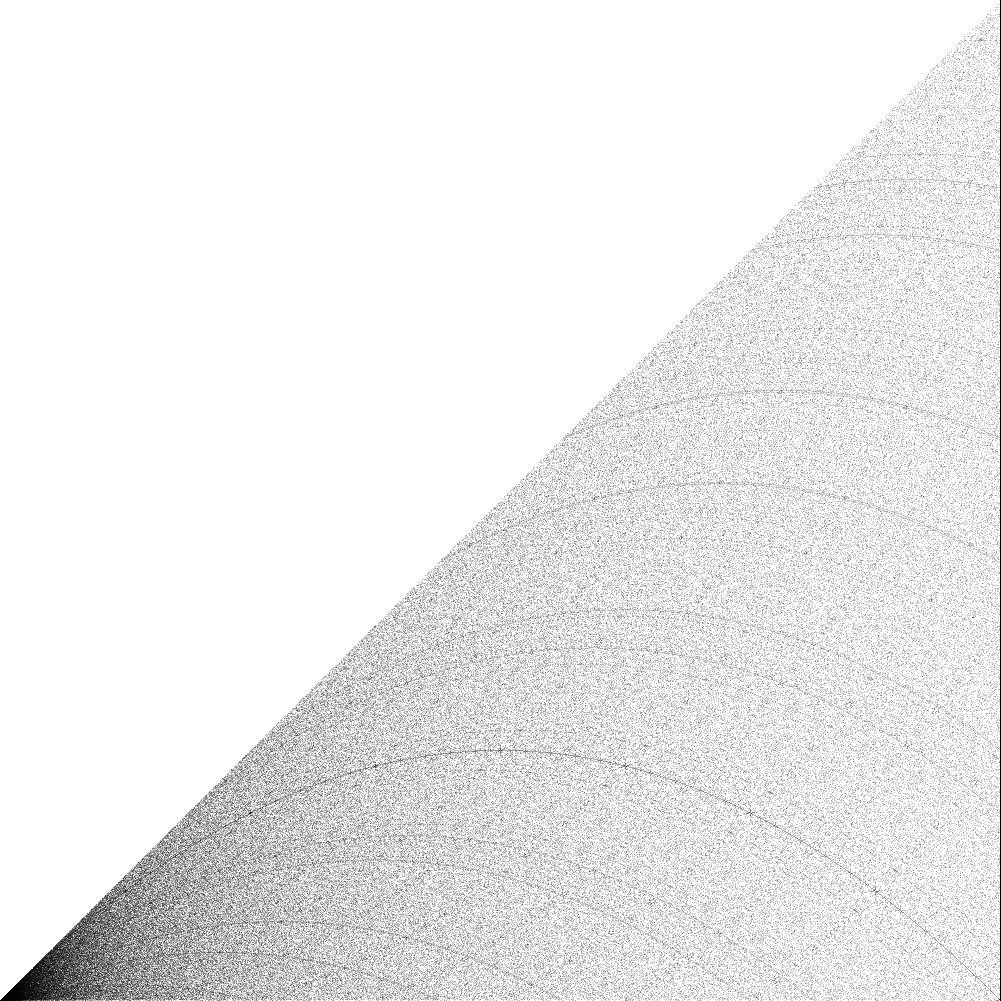}
  \caption{Plotting points $B_1(p,q)$ with  $1<q<p=10^6$ relatively prime.}
  \label{3.1}
\endminipage\hfill
\minipage{0.32\textwidth}
  \includegraphics[width=\linewidth]{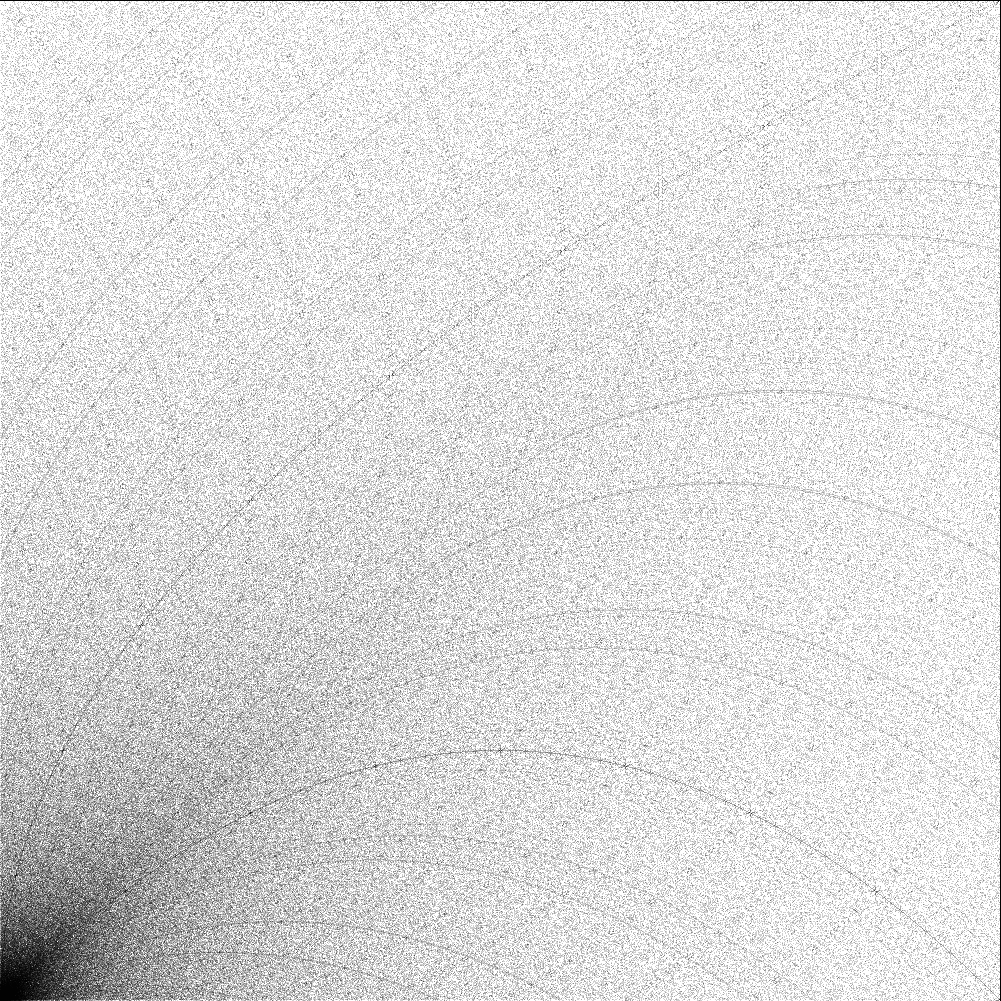}
  \caption{Plotting  $B_1(p,q)$ and $B_1(q,p)$  with  $1<q<p=10^6$ relatively prime.}\label{3.2}
\endminipage\hfill
\minipage{0.32\textwidth}%
  \includegraphics[width=\linewidth]{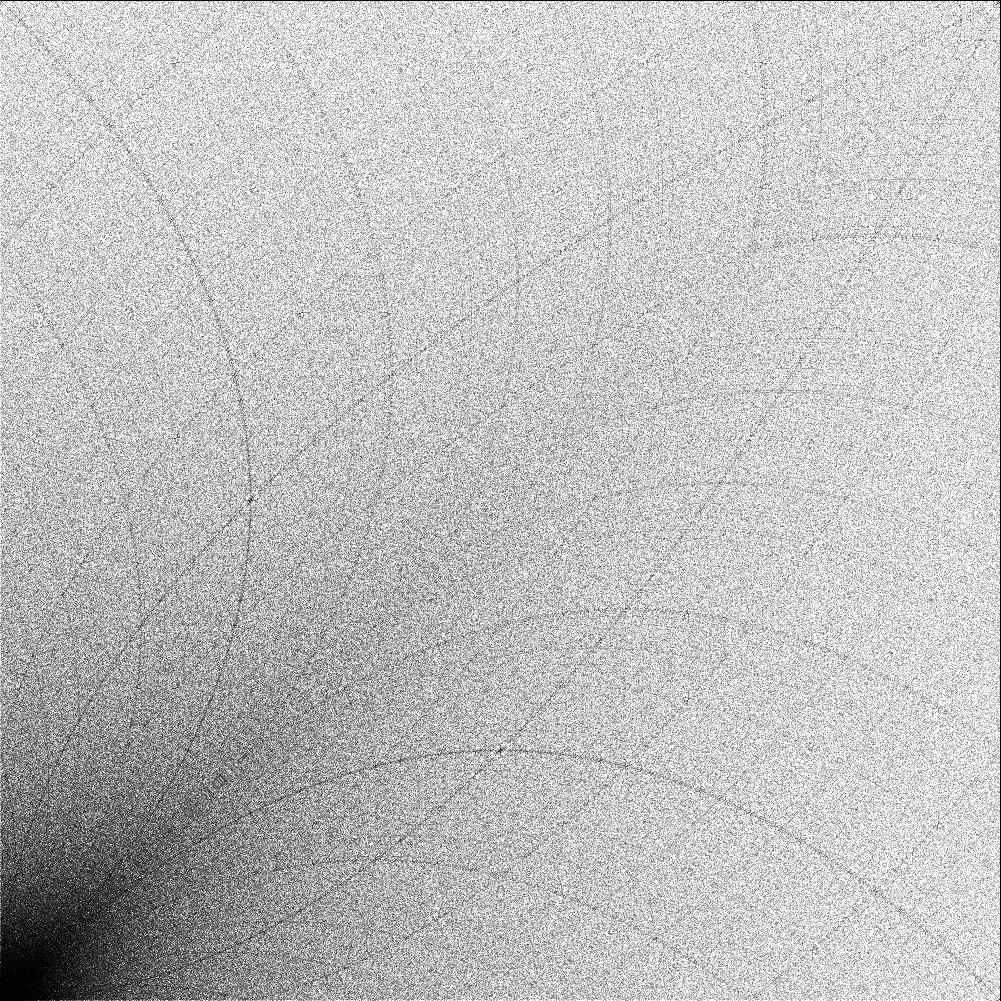}
  \caption{Plotting $B_{\pm 1}(p,q)$ and $B_{\pm 1}(q,p)$  with  $1<q<p=10^6$ relatively prime.}\label{3.3}
\endminipage
\label{3figures}
\end{figure}

 Finally, we notice that, defining $B_1(p,q)=\left( a|q|/q,b|p|/p\right)$, where $B_1(|p|,|q|)=(a,b)$, one extends the B\'ezout transformation $B_1$ to all pairs of relatively prime integer numbers with absolute value greater than one. 
Additionally, $B_{-1}$ may be extended to
all pairs of relatively prime integer numbers $p$ and $q$ with absolute values greater than one, by the
formula $B_{-1}(p,q)=\left(a\sp\prime |q|/q,b\sp\prime |p|/p\right)$
where $B_{-1}(|p|,|q|)=\left(a\sp\prime,b\sp\prime\right)$. 
Analogously,  $B_0$ may be defined for all pairs $(a,b)\not=(0,0)$ of integers by defining $B_{0}\left(a,b\right)=\left(r p/|p|,s q/|q|\right)$, where $B_0(|a|,|b|)=\left(r,s\right)$.

 We have arrived at the desired set of relatively prime numbers, which we define next.

 \begin{definition}
 For a fixed integer number $p>1$ we define $\mathcal B_p$, the {\em B\'ezout set for $p$}, as the set consisting of all $i$-B\'ezout coefficients of $(\pm p,\pm q)$ and $(\pm q,\pm p)$, for $i=\pm 1$, where $1<q<p$ is relatively prime to $p$.
 \end{definition}
 
 In Figure~\ref{fig1} and Figure~\ref{fig2} we plot the B\'ezout set $\mathcal B_p$ for  $p=1,000,000$ and $p=250,000$, respectively. To discard the possibility of an optical artifact, we include  Figure~\ref{fig3}, where the graph of $\mathcal B_{512}$ is shown in $1:1$ scale. It is interesting to notice that the arcs that seem to be forming in these figures appear when $p$ has at least one repeated prime in its prime factorization. For example, for $p=317811=(3)(13)(29)(281)$, which incidentally is also a Fibonacci number, it has a B\'ezout graph with no discernible arcs, while for $p=46368=(2^5)(3^2)(7)(23)$, also a Fibonacci number, the arcs clearly show up. See Figure~\ref{fibonacci}.

\begin{figure}[H]
\centering
\includegraphics[scale=0.18]{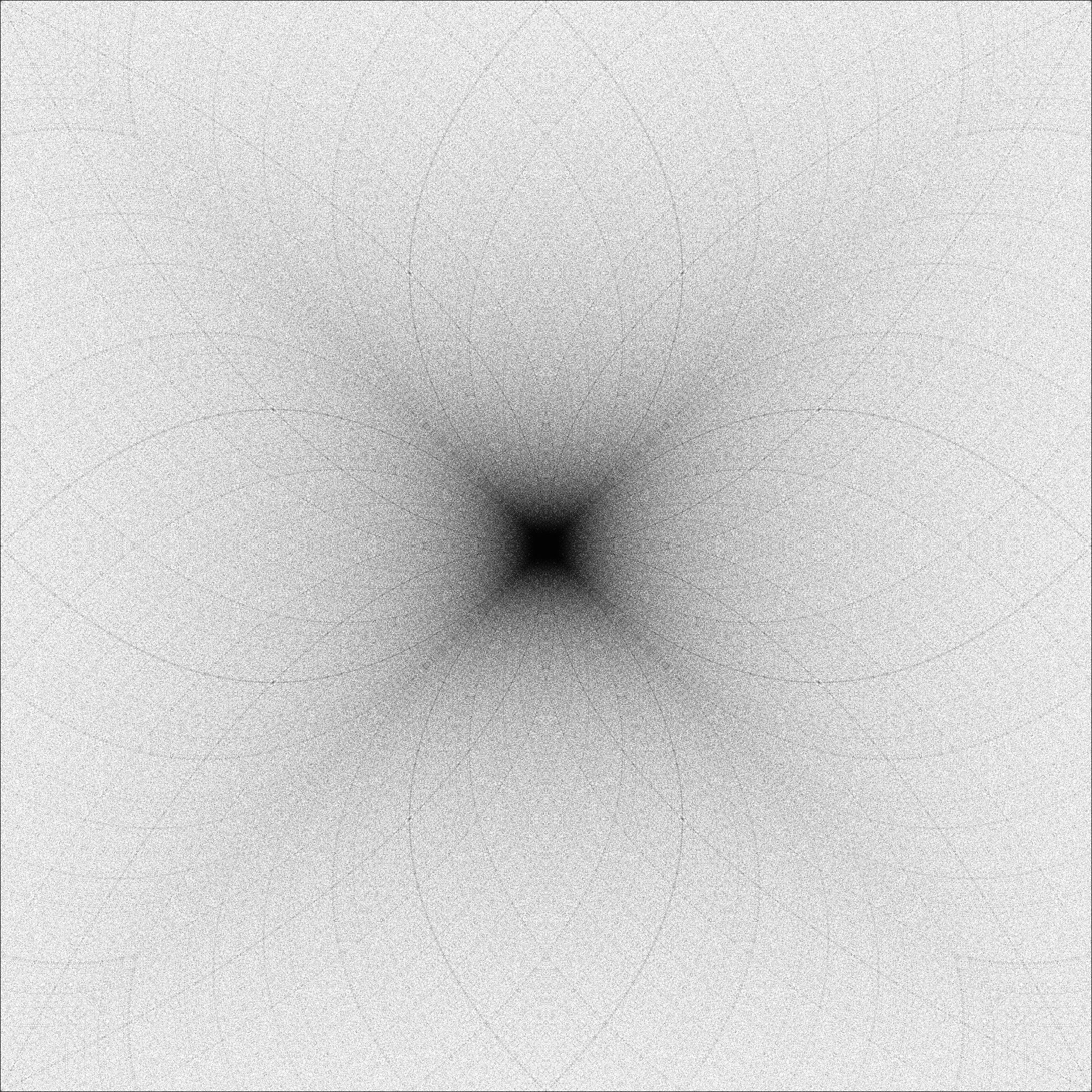}
\caption{B\'ezout set for $p=1000000$.}
\label{fig1}
\end{figure}

\begin{figure}[H]
\centering
\includegraphics[scale=0.18]{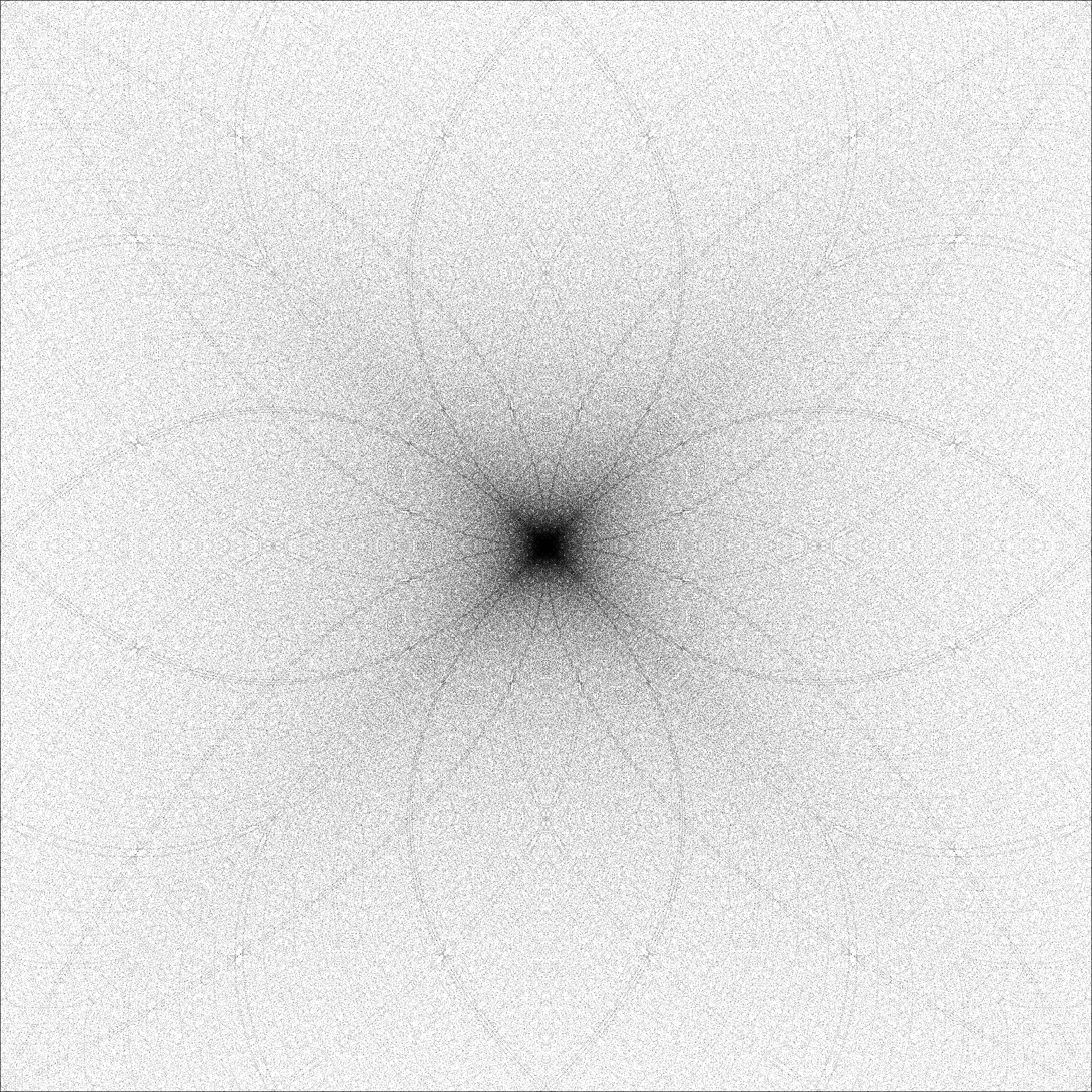}
\caption{B\'ezout set for $p=250000$.}
\label{fig2}
\end{figure}

 \begin{figure}[htb]
\centering
\includegraphics[scale=0.3]{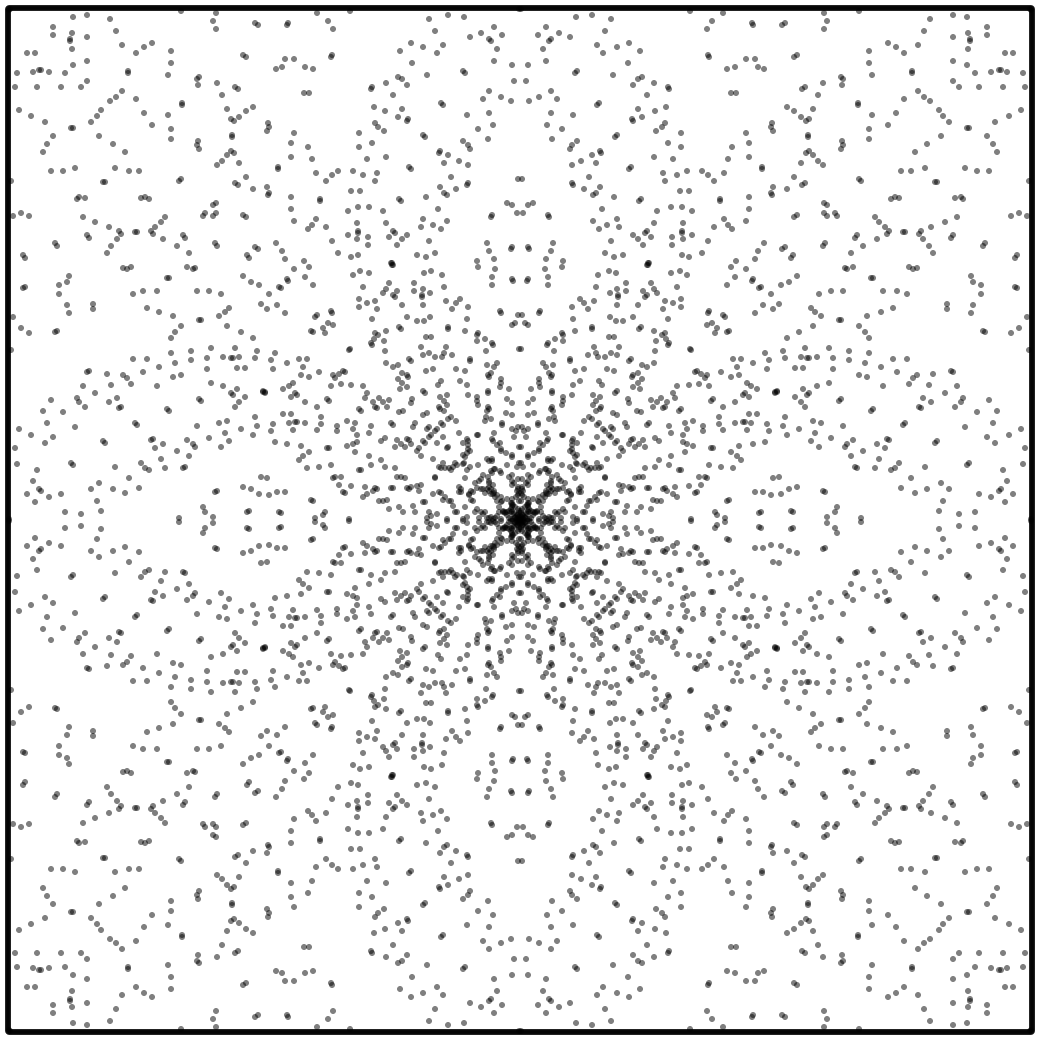}
\caption{A 1:1 scale graph of B\'ezout set for $p=512$.}
\label{fig3}
\end{figure}

To plot the B\'ezout coefficients in Figures~\ref{3.1}--\ref{fig3}, the extended
Euclidean algorithm is used. The extended Euclidean algorithm uses the
Euclidean algorithm repeatedly to produce the coefficients for the B\'ezout identity. Specifically, for $(p,q)$ relatively prime and positive, it produces a pair of integer coefficients $(x,y)$ satisfying the following:
\[
   xq + yp = 1.
\]

Note that then, one of the coefficients $x$ or $y$  must be positive and the other one negative or zero.
If $y>0$, then $x\leq 0$, and we obtain $B_{-1}(p,q)=(|x|,y)$, while if $y\leq 0$ and $x > 0$, then $B_1(p,q)=(x,|y|)$.
If we obtained a pair $(a,b)=B_{-1}(p,q)$, we calculate a new pair applying the transformation $a' = p - a$
and  $b' = q - b$ so that $(a', b')=B_1(p,q)$.
Similarly, if $(a, b)=B_1(p,q)$ then $B_{-1}(p,q)=(p-a,q-b)$.

We conclude this section relating the B\'ezout set $\mathcal B_p$ with the Farey sequence $F_p$, as a motivation for
 restricting our attention to  B\'ezout coefficients. Recall that a Farey sequence $F_p$ of order $p$ is the set of
 all reduced nonnegative fractions less than or equal to 1 and with denominators less  than or equal to $p$, arranged in increasing order. For example, 
 \[
 F_5=\left\{\frac{0}{1},\frac{1}{5},\frac{1}{4},\frac{1}{3},\frac{2}{5},\frac{1}{2},\frac{3}{5},\frac{2}{3},\frac{3}{4},\frac{4}{5}, \frac{1}{1} \right\}
 \]
 and 
\[
F_6=\left\{\frac{0}{1},\frac{1}{6},\frac{1}{5},\frac{1}{4},\frac{1}{3},\frac{2}{5},\frac{1}{2},\frac{3}{5},\frac{2}{3},\frac{3}{4},\frac{4}{5},\frac{5}{6}, \frac{1}{1}\right\}
\]

For a fixed positive integer $p$, notice that there is a one-to-one correspondence between the elements $b/a$ in $F_p$ and the points $(a,b)$, which belong to the set of all pairs of relatively prime numbers with $0<b\leq a\leq p.$ For example, if we graph all points $(a,b)$ such that $b/a$ belongs to $F_p$ with $p=1000$, then we recover Figure~\ref{figRP}.

Farey sequences have remarkable properties. One such property is the following. Given $q/p\in F_p$, the predecessor $b/a$ of $q/p$ in $F_p$
satisfies B\'ezout's relation $aq-bp=1$ \cite[Theorem~5.3, pp 98]{apostol}. Hence, if $b/a$
is the predecessor of $q/p$ in the Farey sequence $F_p$ then $B_1(p,q)=(a,b)$. 
Hence, in other words, we are proposing  to highlight  neighbors of the  elements of the form $q/p$ in the Farey sequence $F_p$. Now, we already pointed out that the B\'ezout coefficients $B_1(p,q)=(a,b)$ of $(p,q)$ correspond to the predecessor $b/a$ of $q/p$ in $F_p$. Analogously, we observed  that the  successor $d/c$ of $q/p$ in $F_p$ satisfies the equality
\begin{equation}\label{B-1}
     cq-dp=-1
\end{equation}
\noindent
and $0< c \leq p$ and $0< d \leq q$. We remark that the $q/p$ in $F_p$ is the mediant sum of its neighbors $b/a$ and $d/c$, that is, $q/p=(b+d)/(a+c)$, see \cite[Theorem~5.2, pp 98]{apostol}.

 \begin{figure}[!htb]
\minipage{0.49\textwidth}
  \includegraphics[width=\linewidth]{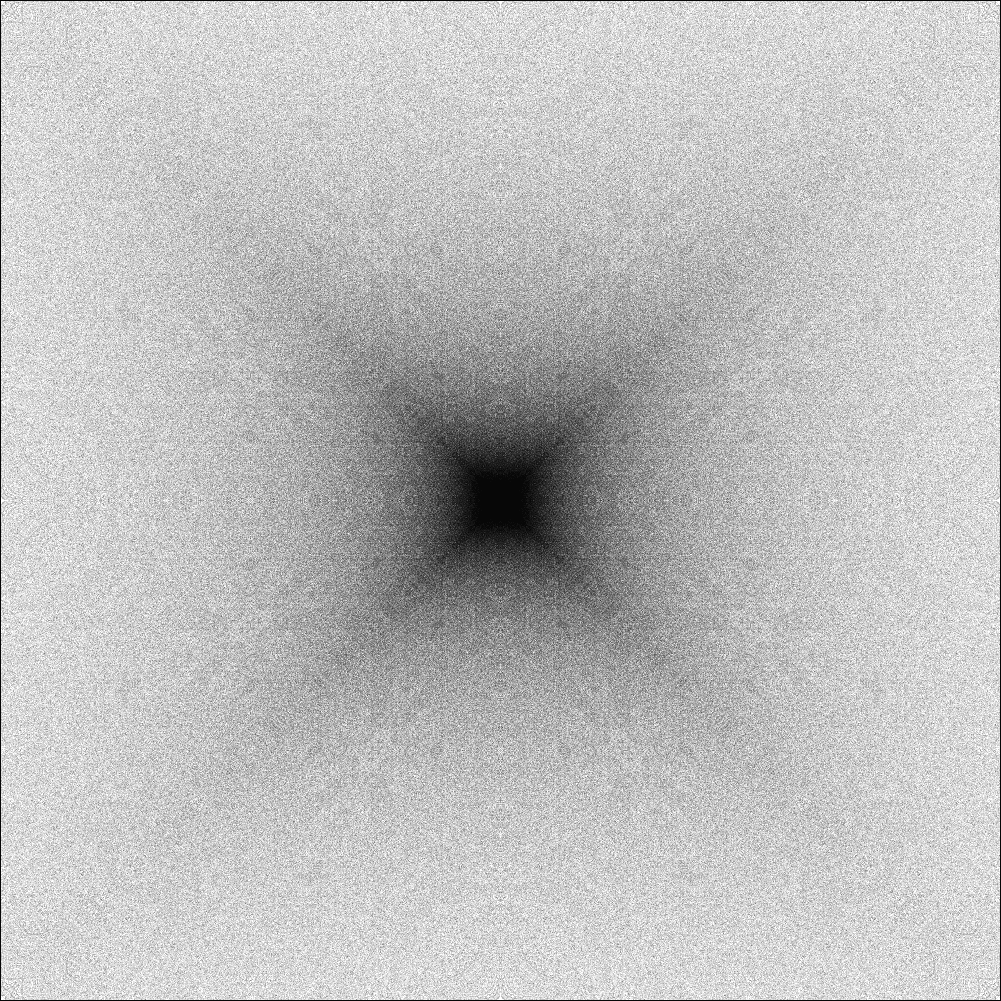}
\endminipage\hfill
\minipage{0.49\textwidth}
  \includegraphics[width=\linewidth]{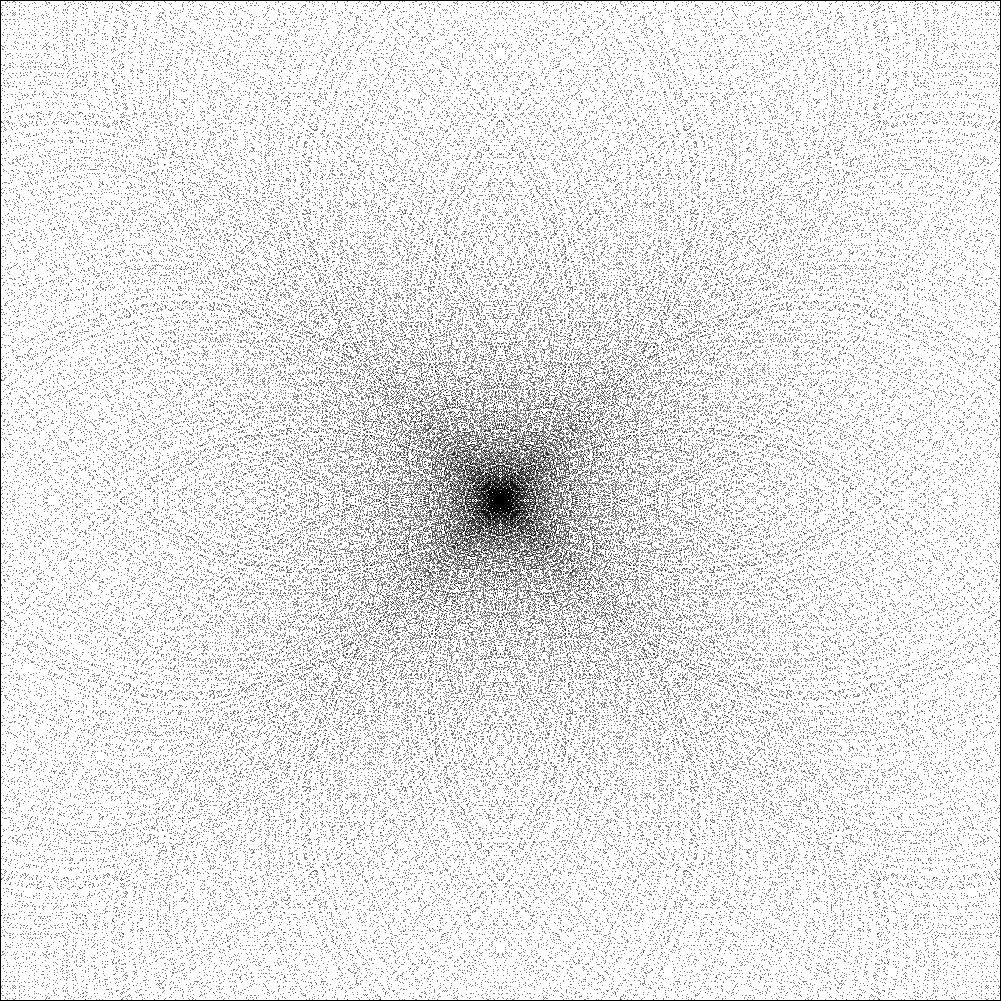}
\endminipage
\caption{B\'ezout sets for $p=317811$ (left) and $p=46368$ (right)}
\label{fibonacci}
\end{figure}

\section{The Geometry of numbers}

In this section we state and prove our main results. They are meant to justify  the appearance of the quadratic arcs and the symmetries in the graphs of the B\'ezout sets.

\subsection{Quadratic arc}

Let us begin by fixing an integer $p>1$  and suppose that $1<q<p$ is an  integer relatively prime to $p$. Suppose  $B_1(p,q)=(a,b)$. We next show
how to derive an arithmetic sequence of relatively prime pairs
$\left\{\left(p,q+nd\right)\right\}_{n\in\mathbb Z}$  so that their B\'ezout
coefficients satisfy a certain quadratic relation. 
 We obtain the arithmetic sequences as follows. Let $w$ be the smallest positive integer such that $pw$ is a perfect square. Let $B_0\left(\sqrt{pw},q-a\right)=(r,s)$. Put $d=r\sqrt{pw}$.
Hence, $\frac{d^2}{p}=r^2w$ is an integer. On the other hand, since $$\frac{sw}{d}=\frac{sw}{r\sqrt{pw}}=\frac{q-a}{\sqrt{pw}}\frac{w}{\sqrt{pw}}=\frac{q-a}{p}$$ we get  $\frac{q-a}{p}d=sw$ is also an integer and thus $b - \tfrac{q-a}{p} \, d  - \tfrac{1}{p}\,  d^2 $ is an integer. Hence, for $n\in\Z$ we get
\begin{align*}
  \left( a-nd\right) (q+nd)- \left( b - \frac{q-a}{p} \, (nd)  - \frac{1}{p}\,  (nd)^2 \right )p &= aq-bp\\
                                  &= 1.
\end{align*}
which proves that $p$ and $q+nd$ are relatively prime. Moreover, whenever $1<q+nd<p$, this shows that 
$B_1(p,q+nd)=\left( a-nd,\ b - \frac{q-a}{p} \, (nd)  - \frac{1}{p}\,  (nd)^2  \right)$. We claim that $B_1(p,q+nd)$ lie in a quadratic curve. Indeed, by making $x=a-nd$ and $y= b - \frac{q-a}{p} \, (nd)  - \frac{1}{p}\,  (nd)^2 $ we obtain 
\begin{align*}
     y &= b - \frac{q-a}{p} \, (nd)  - \frac{1}{p}\,  (nd)^2 \\
       &= b - \frac{q-a}{p} \, (a-x)  - \frac{1}{p}\,  (a-x)^2\\
       &= b-\frac{q}{p}a + \left(\frac{q-a}{p}+\frac{2a}{p}\right)x- \frac{1}{p}x^2\\
       &= -\frac{1}{p}+\frac{a+q}{p}x- \frac{1}{p}x^2\\
       &= a_0 + a_1 x + a_2 x^2
\end{align*}
where  $a_1=\tfrac{a+q}{p}$ and $a_0=a_2=-\frac{1}{p}$ are rational numbers.

\noindent We have then proved  the following.
\begin{theorem}\label{formula}
   Let $p>1$ be an integer and let $1<q<p$ be relatively prime to $p$. Then there is an arithmetic sequence $\left\{q+nd\right\}_{n\in\mathbb Z}$  of integers relatively prime to $p$ such that the $1$-B\'ezout coefficients of the pairs $\left\{\left(p,q+nd\right)\right\}_{n\in\mathbb Z}$  lie in a quadratic curve, that is, there are real numbers $a_0,a_1,a_2$ so that for each $n\in\Z$, the pair $B_1(p,q+nd)$ satisfy the equation $y=a _0+a_1x+a_2x^2$. 
\end{theorem}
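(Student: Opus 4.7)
The plan is to construct the arithmetic sequence explicitly, anchored at $n=0$ on the data $B_1(p,q)=(a,b)$, which satisfies $aq-bp=1$. Perturbing $q$ to $q+nd$ should be compensated by perturbing $a$ to $a-nd$, because the cross terms in $(a-nd)(q+nd)$ then simplify nicely (otherwise there would be a linear-in-$n$ remainder with no reason to vanish modulo $p$). A short expansion shows $(a-nd)(q+nd)-1 = bp - nd(q-a) - (nd)^2$, which equals $b'p$ for an integer $b'$ exactly when $p\mid d(q-a)$ and $p\mid d^2$. These are conditions on $d$ alone, not on $n$, so the whole sequence will succeed or fail together.

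The crux of the argument is thus to exhibit a single $d$ meeting both divisibility conditions simultaneously. I would pick $w$ as the smallest positive integer making $pw$ a perfect square; since $(\sqrt{pw})^2=pw$, this handles $p\mid d^2$ as soon as $d$ is a multiple of $\sqrt{pw}$. To also arrange $p\mid d(q-a)$, I would multiply $\sqrt{pw}$ by the numerator of the reduced fraction $\sqrt{pw}/(q-a)$, i.e., set $d=r\sqrt{pw}$ where $(r,s)=B_0(\sqrt{pw},q-a)$. The defining relation $s\sqrt{pw}=r(q-a)$ then immediately converts $d(q-a)/p$ into $sw$, an integer, so both conditions hold. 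It is this simultaneous fit, rather than either condition in isolation, that is the delicate step.

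With $d$ in hand, the identity $(a-nd)(q+nd) - b'p = 1$ both certifies that $\gcd(p,q+nd)=1$ for every $n\in\Z$ and, under the range constraints $1<q+nd<p$, identifies $B_1(p,q+nd)$ explicitly. To reach the quadratic relation I would eliminate $n$ by setting $x=a-nd$ so that $nd=a-x$, substituting into the expression for the second coordinate, and collecting to recognize the result as $y=a_0+a_1x+a_2x^2$ with rational coefficients; the constants coming out of $aq-bp=1$ will force $a_0$ and $a_2$ to agree up to sign with $1/p$ while $a_1$ comes from the mixed term. The principal obstacle is the construction of $d$: the two divisibility conditions are not obviously compatible, and identifying $w$ (the ``squarefree deficiency'' of $p$) as the right multiplier is the key insight that makes everything fit. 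The remainder of the proof is algebraic bookkeeping.
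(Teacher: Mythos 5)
Your proposal is correct and follows essentially the same route as the paper: the same choice of $w$ as the least positive integer making $pw$ a perfect square, the same $d=r\sqrt{pw}$ with $(r,s)=B_0\left(\sqrt{pw},\,q-a\right)$ to secure both $p\mid d^2$ and $p\mid d(q-a)$, and the same elimination of $n$ via $x=a-nd$ to land on $y=-\tfrac{1}{p}+\tfrac{a+q}{p}x-\tfrac{1}{p}x^2$. No substantive differences to report.
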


For example, for $p=1024$, let $q=817$. We compute $B_1(p,q)=(465,371)$. In this case, $w=1$, since $p$ is a perfect square. Then, $B_0(\sqrt{pw},q-a)=B_0(32,352)=(1,11)=(r,s)$. Hence, the common difference is $d=32$. We list some of the obtained points in Table~\ref{1arco} and illustrate them in Figure~\ref{fig4}. 

\begin{table}[hbt]
\centering
\begin{tabular}{|l|l|c|} \hline
$n$   & $(p,q+nd)$  &  $B_1(p,q+nd)$ \\ \hline
$-2$  & $(1024,753)$  & $(529,389)$ \\
$-1$  & $(1024,785)$  & $(497,381)$  \\
$0$   & $(1024,817)$  & $(465,371)$  \\
$1$   & $(1024,849)$  & $(433,359)$  \\
$2$   & $(1024,881)$  & $(401,345)$  \\
$3$   & $(1024,913)$  & $(369,329)$ \\ \hline
\end{tabular}
\caption{The points of the form $B_1(1024,817+32n)$ in the right column belong to a quadratic arc.}
\label{1arco}
\end{table}

\begin{figure}[htb]
\centering
\includegraphics[scale=0.4]{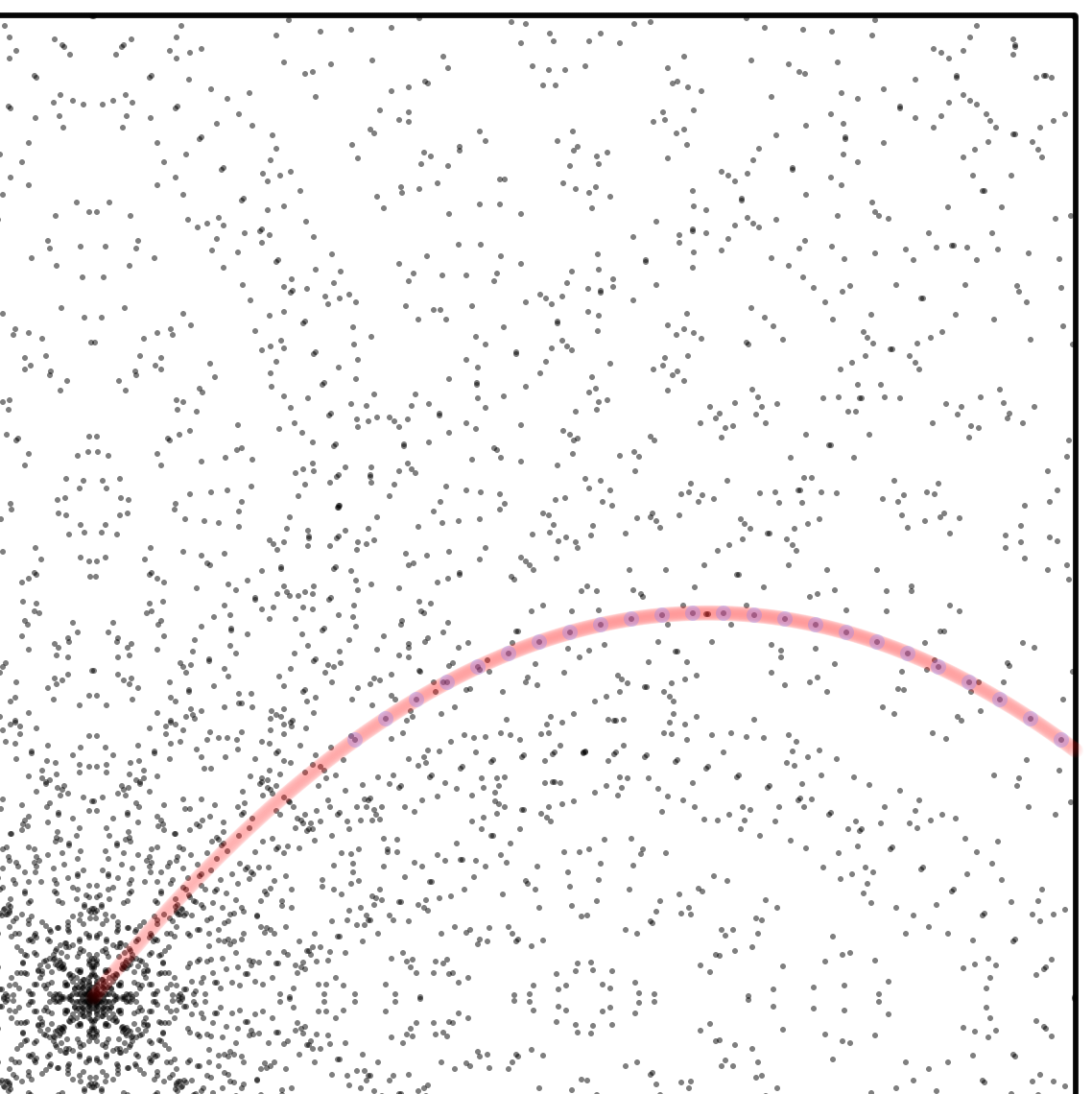}
\caption{Identifying B\'ezout coefficients in a quadratic arc. Here, $p=1024$ and some of the highlighted points are listed in Table~\ref{1arco}.}
\label{fig4}
\end{figure}

In fact, when fixing $p$ and repeating
this exercise for all $q$ relatively prime to $p$, the graph obtained by plotting the corresponding B\'ezout coefficients of the pairs in the sequence turns out to   recover some of the arcs in Figures~\ref{fig1}--\ref{fig3}. We point out that while this argument exhibits quite clearly why the points in Figure~\ref{fig1} are points of a quadratic function, the common difference $d$ in the arithmetic sequence $q+nd$ is sometimes too large to lie in the B\'ezout set since we are restricted to integers $1<q<p$.

\subsection{Symmetries}

Let $F$ denote the flip transformation, that is, $F(p,q)=(q,p)$ for every pair of integers $(p,q)$. The following proposition justify some of the symmetries in the B\'ezout set. It is straightforward so we state it without proof.

\begin{proposition} The following properties hold.
   \begin{enumerate}
      \item[(a)] $F$ is invertible and $F^{-1}=F$.
      \item[(b)] $B_{-1}\circ F=F\circ B_1$. Here, the symbol $\circ$ denotes composition of functions. 
      \item[(c)] $B_{-1}+B_1=B_0$. Here, the addition is the usual component-wise addition of pairs of numbers, that is, $(a_1,b_1)+(a_2,b_2)=(a_1+a_2,b_1+b_2)$. This operation defines a group structure on $\Z^2=\Z\times\Z$. This property corresponds to the mediant sum of Farey sequences.
    \end{enumerate}
\noindent Furthermore, if $B_1(p,q)=(a,b)$, with $p,q>1$ relatively prime, then it follows that
    \begin{enumerate}
      \item[(d)] $B_{-1}(p,p-q)=(a,a-b)$
      \item[(e)] $B_1(p,a)=(q,b)$
      \item[(f)] $B_1(q,p)=(q-b,p-a)$
  \end{enumerate}
  \end{proposition}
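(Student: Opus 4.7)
The proposition is a list of six elementary identities; my plan is to verify each by manipulating the defining B\'ezout identity $aq - bp = i$ together with the uniqueness afforded by the constraint window $0 < a \le p$, $0 < b \le q$. Part (a) is immediate from $F(F(p,q)) = F(q,p) = (p,q)$, so I would dispense with it in one line.

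For (b), if $B_1(p,q) = (a,b)$ then $aq - bp = 1$, so the same equation read as $bp - aq = -1$ says that the pair $(b,a)$ satisfies the defining identity for $B_{-1}(q,p)$; the inequality constraints $0 < b \le q$ and $0 < a \le p$ are already in hand, so uniqueness gives $B_{-1}(F(p,q)) = (b,a) = F(B_1(p,q))$. For (c), I would invoke the observation recorded at the end of Section~1: if $B_1(p,q) = (a,b)$ then a one-line computation $(p-a)q - (q-b)p = -(aq-bp) = -1$ together with the obvious inequalities identifies $B_{-1}(p,q) = (p-a, q-b)$; adding componentwise gives $B_1(p,q) + B_{-1}(p,q) = (p,q) = B_0(p,q)$, the last equality because $p$ and $q$ are already coprime.

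Parts (d)--(f) all follow a single template: starting from $aq - bp = 1$, rearrange to produce a B\'ezout identity for the claimed target pair, then check the inequality window so that uniqueness applies. Concretely, for (d) the computation $a(p-q) - (a-b)p = bp - aq = -1$ identifies the candidate $(a, a-b)$ for $B_{-1}(p, p-q)$; for (e) the tautological rewrite $qa - bp = 1$ identifies $(q,b)$ as $B_1(p,a)$; and for (f) the expansion $(q-b)p - (p-a)q = aq - bp = 1$ identifies $(q-b, p-a)$ as $B_1(q,p)$. Along the way I would record the auxiliary facts $a > b$ (since $aq = bp+1 > bp \ge bq$) and $a < p$, $b < q$ (else the identity would force $p = 1$ or $q = 1$), which will be needed repeatedly.

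The only real bookkeeping obstacle is the inequality $a - b \le p - q$ required by uniqueness in part (d); this is not a one-line algebraic consequence of $aq - bp = 1$. My plan is to handle it by rewriting the inequality as $(p-a)(p-q) \ge 1$, which then follows at once from $a < p$ and $q < p$. Beyond this single nuisance, each of the six claims reduces to a short arithmetic manipulation plus the uniqueness clause in the definition of $B_i$, which is presumably why the authors stated the proposition without proof.
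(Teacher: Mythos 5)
The paper gives no proof of this proposition (the authors declare it ``straightforward'' and omit it), so there is no argument of theirs to compare with; your verification is correct and is surely the intended one: each part is a one-line rearrangement of $aq-bp=1$ followed by a check of the window $0<\cdot\le p$, $0<\cdot\le q$ so that the uniqueness clause of the definition applies, using the auxiliary bounds $a<p$, $b<q$ and $b<a$ (the last of which tacitly uses $q<p$, the standing convention of the paper, without which (d)--(f) do not even parse). Your handling of the only non-immediate step, the inequality $a-b\le p-q$ in (d) via the equivalent form $(p-a)(p-q)\ge 1$ with both factors at least $1$, is also correct.
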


 In Theorem~\ref{formula} we were able to  provide a formula  that justifies the appearance of quadratic arcs in the B\'ezout sets. This formula, however, is not exhaustive, that is, it is possible to identify quadratic arcs in some B\'ezout sets were the common difference in the arithmetic sequence involved is too large so that it misses or skips some points over the quadratic arcs. For this reason, we now give a second argument to explain the formation of the remarkable symmetries presented in Figures~\ref{fig1}, \ref{fig2} and \ref{fig3}. This argument, unlike the given before, is algebraic in nature.
Let $p>1$ be an integer. Recall the notation $a\equiv b \mod p$
 means that $p$ divides the difference $a-b$. 
 
 \begin{definition}
 We define the set $\Z_{p}^{*}$ as the set of all positive integers less than $p$ and relatively prime to $p$. It is an Abelian group of order $\phi(p)$, with the operation multiplication module $p$, which we shall denote by $\bullet$
 \end{definition}
 
 The reader is referred to \cite[Theorem~2.4.7, pp.~62]{herstein} for more details about this group. Notice that the group unit in $\Z_{p}^{*}$ is the number $1$. As an example, suppose $p=9$; then, we have $\Z_{9}^{*}=\{1,2,4,5,7,8\}$, and the product $2\bullet 5=1$ in $\Z_{9}^{*}$ since $(2)(5)=10\equiv 1 \mod{9}$. Similarly, $4\bullet 7=1$ in $\Z_{9}^{*}$, this also says that $4^{-1}=7$ in $Z_{9}^{*}$.


Let $G$ be an Abelian group. It is straightforward to check that the mapping $\theta\colon G\to G$ given by $\theta (g)=g^{-1}$ is an automorphism.
We obtain the following theorem.

\begin{theorem}
 Let $p>1$ be an integer and let $\Z_p^\ast$ be the Abelian group of positive integers relatively prime to $p$ and less than $p$ with multiplication module $p$. Consider the automorphism $\theta_p\colon Z_p^\ast\to Z_p^\ast$ defined as $\theta_p(q)=q^{-1}$. Then, for any $q\in Z_p^\ast$ we have 
 \begin{equation*}
     \theta_p(q)\equiv q^{\phi(p)-1}\mod p
 \end{equation*}
 where $\phi$ is Euler's totient function. Moreover
 \begin{equation*}
    B_1(p,q)=\biggl(\theta_p(q),\dfrac{q\ \theta_p(q)-1}{p}\biggr).
 \end{equation*}
\end{theorem}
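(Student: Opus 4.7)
The plan is to split the theorem into its two claims: the formula for $\theta_p$ in terms of a power of $q$, and the closed form for $B_1(p,q)$ in terms of $\theta_p(q)$. The first claim is essentially Euler's theorem, while the second is a verification that the candidate pair $(\theta_p(q),(q\,\theta_p(q)-1)/p)$ satisfies every condition in the definition of the $1$-B\'ezout coefficients.

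For the first claim, I would invoke Euler's theorem, which states that $q^{\phi(p)}\equiv 1\pmod p$ whenever $\gcd(q,p)=1$. Factoring as $q\cdot q^{\phi(p)-1}\equiv 1\pmod p$, the integer $q^{\phi(p)-1}\bmod p$ is seen to be the multiplicative inverse of $q$ in $\Z_p^\ast$, which is precisely $\theta_p(q)$.

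For the second claim, write $a=\theta_p(q)$, so that $a\in\Z_p^\ast$ and $aq\equiv 1\pmod p$. Then $b:=(aq-1)/p$ is automatically a (nonnegative) integer, and the identity $aq-bp=1$ holds by construction; this is the B\'ezout equation \eqref{BezoutID} for $i=1$. It then remains to check the four requirements in the definition of $B_1(p,q)$: (i) $0<a\leq p$, which is immediate since $1\le a\le p-1$ by membership in $\Z_p^\ast$; (ii) $\gcd(a,b)=1$, which is forced by $aq-bp=1$; (iii) $b>0$, which follows because $a=1$ would force $q\equiv 1\pmod p$, impossible for $1<q<p$, so in fact $a\ge 2$ and $aq-1\ge 2q-1>0$; and (iv) $b\le q$, which follows from $bp=aq-1\le(p-1)q-1<pq$, so $b<q$.

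The only subtlety is part (iii)–(iv) of the verification, where one must use both $q>1$ and $a<p$ (implicit in the definition of $\Z_p^\ast$) to bracket $b$ correctly; the edge case $q=1$ must be excluded since $B_1(p,1)$ is not defined in the B\'ezout framework. Apart from this minor bookkeeping, the entire argument is a direct translation between the modular-arithmetic language of $\Z_p^\ast$ and the integer-linear-combination language of B\'ezout's identity, so no genuine difficulty arises once Euler's theorem is in hand.
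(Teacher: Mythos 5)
Your proof follows essentially the same route as the paper: Euler's theorem yields $\theta_p(q)\equiv q^{\phi(p)-1}\pmod p$ after cancelling the invertible element $q$, and the closed form for $B_1(p,q)$ is obtained by directly verifying the B\'ezout identity $aq-bp=1$ with $a=\theta_p(q)$ and $b=(q\,\theta_p(q)-1)/p$. You are in fact slightly more careful than the paper, which checks only the identity and the integrality of $b$, omitting the bound verifications $0<a\le p$ and $0<b\le q$ (and the $q=1$ edge case) that the definition of $B_1$ requires.
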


\begin{proof}
Let $q\in\Z_p^\ast$. Since Euler's theorem \cite[Theorem~2.4.8, pp.~63]{herstein}  states that $q^{\phi(p)}\equiv 1\mod p$, then we have that $q\theta_p(q)\equiv q\, q^{\phi(p)-1}\mod p$. But also $q\,\theta_p(q)\equiv 1 \mod p$. Thus $\theta_p(q)\equiv q^{\phi(p)-1}\mod p $, as wanted.

Finally, since $q\bullet\theta_p(q)=1$ in $\Z_p^\ast$, we have that  $\dfrac{q\ \theta_p(q)-1}{p}$ is an integer. We then compute
\begin{align*}
     q\,\theta_p(q)-p\dfrac{q\ \theta_p(q)-1}{p}
     &= q\,\theta_p(q) - q \left( \theta_p(q)-1\right)\\
     &= 1
\end{align*}
and obtain $B_1(p,q)=\biggl(\theta_p(q),\dfrac{q\, \theta_p(q)-1}{p}\biggr)$, as was to be proved.
\end{proof}

 For example, for $4\in\Z_9^\ast$, we computed before $\theta_9(4)=7$ and so $\frac{q\,\theta_p(q)-1}{p}=\frac{4(7)-1}{9}=3$. Thus $B_1(9,4)=(7,3)$, which gives us an alternative way to compute B\'ezout coefficients.

Incidentally, the integer $\frac{q^{\phi(p)}-1}{p}$, obtained from Euler's theorem, is known as Euler's quotient. Since an automorphism of a finite group can be regarded as a rule that maps the elements of the group in such a way that geometry, in terms of algebra,  is preserved, 
hence, it can be argued that the symmetries exhibited in Figures~\ref{fig1}, \ref{fig2} and \ref{fig3} are somehow the symmetries preserved by the group automorphism $\theta_p$. Indeed, when plotting the B\'ezout set $\mathcal B_p$, we essentially are plotting points of the form  $B_1(p,q)=\biggl(\theta_p(q),\dfrac{q\ \theta_p(q)-1}{p}
\biggr)$, with  $q\in \Z_{p}^{*}$. Thus, the first component of $B_1(p,q)$ is nothing but the image of an automorphism, while the second component may be regarded as an Euler quotient.

\end{document}